\newtheorem{theo}{Theorem}[section]
\newtheorem{lemma}[theo]{Lemma}
\newtheorem{cor}[theo]{Corollary}
\newtheorem{fact}[theo]{Fact}
\newcommand{\spV}{M}
\begin{document}

\title{On irreducible representations of free group}
\author{Waldemar Hebisch}
\address{Waldemar Hebisch \\Mathematical Institute \\
University of Wroc{\l}aw \\
50-384 Wroc{\l}aw \\
pl. Grunwaldzki 2/4 \\
Poland}
\email{hebisch@mail.math.uni.wroc.pl}


\begin{abstract}
 We prove
that for a suitable class of representations of free group tensor
products are generically irreducible.  In particular
we prove that there exist irreducible boundary
realizations with infinite dimensional fiber.
\end{abstract}

\subjclass[2020]{Primary 22D10, 43A65; Secondary 22D25}
\keywords{Free group, irreducible, tempered, unitary representation,
 second category}

\maketitle

\section{Introduction}

General structure of space of irreducible unitary representations
of group (or $C^{*}$-algebra)
of type I is now well understood.  For groups of type II
it is known that structure of irreducible representations is
less regular and our understanding of representations of
such groups is quite limited.  Free group is a classical
example of group of type II and there is considerable
interest in various families of representations of free
group.  Of particular interest are representations
weakly contained in the regular representation.  We prove
that for a suitable class of representations tensor
products are generically irreducible, which implies
that corresponding representations must be less
regular than previously believed.  In particular
we prove that there exist irreducible boundary
realizations with infinite dimensional fiber.

\section{Preliminaries and notation}

Let $S$ be a finite set with cardinality bigger than $1$ and
$G$ be the free group generated by $S$.  We denote by
${\mathcal A} = {\mathbb C}[G]$ group algebra of $G$.
We say that unitary representation $\eta$ of $G$ is {\it tempered} when
it is is weakly contained in the regular representation $\lambda$ of $G$.

It is well-known that an irreducible tempered unitary representation $\eta$ of
$G$ is weakly equivalent to the regular representation, that is for all
$f \in {\mathcal A}$ we have
$$\|\eta(f)\| = \|\lambda(f)\|.$$

Let $\partial G$ be boundary of $G$.  It is well known that for
tempered unitary representation $\eta$ we can view representation space
$V$ as $L^2(\mu, H)$ where  $\mu$ a probability measure on $\partial G$
which is quasi invariant under action of $G$, $H$ is a Hibert space
and action of $g\in G$ consists of translation by $g$ and multiplication
by an operator depending on $\omega \in \partial G$ and $g\in G$
(see \cite{HKS} Proposition 2.2).
Such view is called {\it boundary realization}.  We call space
$H$ {\it fiber} of the boundary realization.  There is
powerful machinery to prove irreducibility of nice tempered
representations with boundary realizations having finite dimensional
fiber (see \cite{HKS} for more technical part, \cite{PSt},
\cite{KS04}, 
\cite{KSS23} and other for applications).
One of goals of
current paper is to prove existence of
irreducible representations of $G$ having boundary realizations
with infinite dimensional fiber.  Namely, given a boundary
realization we can form tensor product with arbitrary unitary
representation $\pi$ on Hilbert space $M$.  Then new boundary
realization has fiber $H\otimes M$.  So for infinite dimensional
$M$ we get infinite dimensional fiber.  Our main result below
is that for some $\eta$ tensor product $\eta \otimes \pi$
is irreducible for generic $\pi$.

In proof of our results we need Kaplansky density theorem:

\begin{fact}
Let $H$ be a Hilbert space.
If $A$ is a selfadjoint subalgebra of $B(H)$, then unit ball of
$B$ is strongly dense in unit ball of $\bar A$, where $\bar A$
means strong closure of $A$ in $B(H)$.
\end{fact}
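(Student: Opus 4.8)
The stated conclusion is the Kaplansky density theorem; I read it as asserting that the unit ball of $A$ (the symbol $B$ in the displayed line appears to be a typo for $A$) is strongly dense in the unit ball of the strong closure $\bar A$. The plan is to run a short chain of reductions and then dispatch the one genuinely analytic point. First I would reduce to the case that $A$ is norm closed, i.e.\ a $C^{*}$-algebra: replacing $A$ by its norm closure alters neither $\bar A$ nor, after a harmless rescaling, the strong density of the unit ball, since norm convergence implies strong convergence and $\|a_j\|\to\|b\|$ lets one scale approximants back into the ball. Next I would reduce the general assertion to the self-adjoint one — that the unit ball of $A_{sa}$ is strongly dense in the unit ball of $(\bar A)_{sa}$ — by the usual $2\times 2$ matrix device: for a contraction $x\in\bar A$ form the self-adjoint contraction $\tilde x=\bigl(\begin{smallmatrix}0&x\\ x^{*}&0\end{smallmatrix}\bigr)$ on $H\oplus H$, apply the self-adjoint case to $M_2(A)\subseteq B(H\oplus H)$, whose strong closure is $M_2(\bar A)$, and read off the upper-right corner of the approximating self-adjoint contractions.

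For the self-adjoint case I would argue in two stages. The first is pure soft analysis: $A_{sa}$ is a convex subset of the real space $B(H)_{sa}$, so its strong and weak operator closures coincide (the two topologies have the same continuous linear functionals on $B(H)$), and since the adjoint is weakly continuous and $A$ is weakly dense in $\bar A$, the weak closure of $A_{sa}$ is all of $(\bar A)_{sa}$; hence $A_{sa}$ is strongly dense in $(\bar A)_{sa}$, with no norm control yet. The second stage supplies the norm control. Given $h\in(\bar A)_{sa}$ with $\|h\|\le 1$, I would use the function
$$f(t)=\frac{2t}{1+t^{2}},$$
which maps $\mathbb{R}$ into $[-1,1]$, satisfies $f(0)=0$, lies in $C_0(\mathbb{R})$, and restricts to a homeomorphism of $[-1,1]$ onto itself. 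Writing $h=f(h_0)$ with $h_0=f^{-1}(h)\in(\bar A)_{sa}$ and $\|h_0\|\le 1$, I would pick $k_j\in A_{sa}$ with $k_j\to h_0$ strongly; then $f(k_j)\in A$ (because $f(0)=0$), $\|f(k_j)\|\le 1$, and $f(k_j)\to f(h_0)=h$ strongly, giving the desired net in the unit ball of $A_{sa}$.

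The crux, and the step I expect to be the main obstacle, is the continuity claim buried in the last line: that $x\mapsto f(x)$ is \emph{strongly} continuous on $B(H)_{sa}$ for $f\in C_0(\mathbb{R})$. This fails for general bounded continuous $f$, which is precisely why the naive truncation $t\mapsto\max(-1,\min(1,t))$ cannot be applied directly and the $C_0$ function above must be used instead. I would prove the continuity by reducing, via Stone--Weierstrass, to the resolvent functions $t\mapsto(t-\lambda)^{-1}$ that generate $C_0(\mathbb{R})$ as a $C^{*}$-algebra; strong continuity of $x\mapsto(x-\lambda)^{-1}$ follows from the resolvent identity together with the uniform bound $\|(x-\lambda)^{-1}\|\le|\operatorname{Im}\lambda|^{-1}$, and then one uses that sums, products, and uniform limits of uniformly bounded strongly continuous maps are again strongly continuous. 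Combining this lemma with the three reductions above completes the proof.
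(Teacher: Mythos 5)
The paper gives no proof of this Fact at all: it is invoked as the classical Kaplansky density theorem (``In proof of our results we need Kaplansky density theorem''), so there is no internal argument to compare yours against. Judged on its own, your proof is correct, and it is the standard textbook proof (essentially the one in Murphy's or Pedersen's books). You also read the statement correctly: the ``$B$'' in the displayed sentence is a typo for ``$A$''. All the main steps are sound: the reduction to norm-closed $A$ (harmless, since the unit ball of $A$ is norm-dense in that of its norm closure); the reduction of a general contraction to the self-adjoint case via the $2\times 2$ matrix trick in $M_2(A)$, where the identification of the strong closure of $M_2(A)$ with $M_2(\bar A)$ holds because compression to a fixed corner is strongly continuous; the convexity argument (WOT and SOT have the same continuous functionals, and the adjoint is WOT-continuous), which gives strong density of $A_{sa}$ in $(\bar A)_{sa}$ without norm control; and the recovery of the norm bound through $f(t)=2t/(1+t^2)$. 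You also isolated the one genuinely analytic point correctly --- strong continuity of the functional calculus $x\mapsto f(x)$ for $f\in C_0(\mathbb{R})$ on all of $B(H)_{sa}$, which is needed precisely because the approximating net from the convexity step carries no norm bound --- and your resolvent-identity plus Stone--Weierstrass argument for it is complete: the resolvents $(t-\lambda)^{-1}$, $\lambda\notin\mathbb{R}$, form a self-adjoint, point-separating, nowhere-vanishing family generating $C_0(\mathbb{R})$, each is strongly continuous by the resolvent identity and the uniform bound $|\operatorname{Im}\lambda|^{-1}$, and uniform boundedness lets you pass to sums, products and uniform limits. Two small points you left implicit, both routine: membership $f^{-1}(h)\in\bar A$ and $f(k_j)\in A$ requires the relevant algebra to be norm closed together with $f^{-1}(0)=f(0)=0$ (the former holds because a strongly closed algebra is norm closed, the latter is exactly what your first reduction arranged); and the approximating families should be taken as nets rather than sequences, since the strong operator topology on $B(H)$ is not first countable --- your argument goes through verbatim for nets.
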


\section{Main result}

Let ${\mathcal A} = {\mathbb C}[G]$ be group algebra of free
group.  Assume that $\eta$ is a tempered unitary representation of $G$,
such that for every irreducible finite dimensional unitary representation
$\pi$ of $G$ tensor product $\eta\otimes\pi$ is irreducible.

Let $\spV$ be a fixed infinite dimensional separable Hilbert
space.
Representation of $G$ on $\spV$ is uniquely determined by
images of generators of $G$, and since $G$ is a free group,
any assignment of unitary operators to generators extends
to a representation.  So we can identify set of representations
of $G$ on $\spV$ with tuples of unitary operators on $\spV$.
On unitary operators we have topology of strong operator
convergence.  This topology is separable (there is a countable
dense subset) and can be given by a complete
metric.  Identifying representations with tuples of
operators we can treat space of representations as a
complete separable metric space.

Let $H$ be representation space of $\eta$.  Consider
$H\otimes \spV$ where tensor product is in category of Hilbert
spaces.

\begin{theo}\label{gen:irr}
Set of representations $\pi$ such that $\eta\otimes\pi$
is irreducible is of second category.
\end{theo}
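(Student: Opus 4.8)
The plan is to translate irreducibility of $\eta\otimes\pi$ into a statement about the strong closure of the algebra $(\eta\otimes\pi)(\mathcal A)$, exhibit the good set as a dense $G_\delta$, and invoke the Baire category theorem for the complete separable metric space of representations. Write $K=H\otimes\spV$. Since $G$ is free and $\mathcal A=\mathbb C[G]$ is the linear span of the $g\in G$, every element $(\eta\otimes\pi)(a)$, $a=\sum_g c_g g$, satisfies $\|(\eta\otimes\pi)(a)\|\le\sum_g|c_g|$ uniformly in $\pi$, and the map $\pi\mapsto(\eta\otimes\pi)(a)\xi$ is continuous for the strong topology; these two facts make all the matrix-coefficient functions $\pi\mapsto\langle(\eta\otimes\pi)(a)\xi,\zeta\rangle$ continuous and are used throughout.

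First I would record the elementary reformulation: $\eta\otimes\pi$ is irreducible iff its commutant is $\mathbb C I$, iff $(\eta\otimes\pi)(\mathcal A)$ is strongly dense in $B(K)$, iff every nonzero vector of $K$ is cyclic. Fix countable dense sets $\{\xi_j\}$, $\{\zeta_j\}$ in the unit sphere of $K$. Using the quoted Kaplansky density theorem I would show that irreducibility is equivalent to the countable family of conditions: for all $i,j,n$ there is $a\in\mathcal A$ with $(\eta\otimes\pi)(a)$ a contraction and $\|(\eta\otimes\pi)(a)\xi_i-\zeta_j\|<1/n$. The point of demanding contractive approximants is that the bound $\|(\eta\otimes\pi)(a)\|\le1$ lets one transfer the approximation from the countable dense set of vectors to all unit vectors, by Lipschitz control in the source vector; without Kaplansky this transfer fails because $\|(\eta\otimes\pi)(a)\|$ is otherwise uncontrolled. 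Each such condition defines an open subset of the space of representations, so the good set is a $G_\delta$.

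Second, and this is the substantive step, I would prove that the good set is dense. Given $\pi$ and a basic neighborhood determined by finitely many vectors $x_1,\dots,x_r\in\spV$, choose a finite-dimensional subspace $M'\subseteq\spV$ containing these vectors together with their images, and replace the action on $M'$ by a finite-dimensional irreducible representation $\sigma$ that agrees with $\pi$ on $x_1,\dots,x_r$ to within the prescribed tolerance; such a $\sigma$ exists because on a large finite-dimensional corner an arbitrary prescribed action on a few vectors extends to an irreducible tuple of unitaries, irreducibility being generic in dimension at least two, where $\lvert S\rvert>1$ is used. By hypothesis $\eta\otimes\sigma$ is irreducible, and by Kaplansky its image is strongly dense in $B(H\otimes M')$. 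The plan is then to fill the infinitely many remaining dimensions of $\spV$ by coupling, along an exhaustion $M'\subseteq M''\subseteq\cdots$, a sequence of such finite-dimensional irreducible blocks through off-diagonal unitary corrections—legitimate because $G$ is free, so any unitaries may be assigned to the generators—so that in the strong limit every nonzero vector of $K$ becomes cyclic while the representation stays inside the given neighborhood, with Kaplansky again supplying the uniform norm control needed to pass to the limit.

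The main obstacle is precisely this last construction. A direct sum, or an amplification $\sigma\otimes I$, of finite-dimensional irreducible blocks always yields a reducible tensor product, since cross-block vectors give exact invariant-subspace witnesses; so the infinitely many blocks must be genuinely entangled by the off-diagonal part, chosen so that the only operators commuting with $\eta\otimes\pi$ are scalars. Equivalently, the underlying difficulty is that reducibility is not a closed condition: along a strongly convergent net the intertwining operator supplied by weak compactness of the unit ball may degenerate to a scalar, so that non-scalarness of the commutant dissolves in the limit. Overcoming this—by using the finite-dimensional hypothesis to keep a reducing operator detectable on a finite-dimensional corner, and Kaplansky density to retain norm control—is the crux, and it is what forces the $G_\delta$-plus-density route rather than a direct closedness argument.
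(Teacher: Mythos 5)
Your overall Baire strategy (exhibit the good set as a dense $G_\delta$ in the complete separable metric space of representations) matches the paper's, but two steps are genuinely incomplete, and the second is exactly where the paper's method does something different from what you propose.

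First, openness. Your countable conditions quantify over $a\in{\mathcal A}$ with ``$(\eta\otimes\pi)(a)$ a contraction,'' and you assert each such condition defines an open set. But the operator norm is only lower semicontinuous in the strong topology: a small strong perturbation of $\pi$ can push $\|(\eta\otimes\pi)(a)\|$ above $1$, and rescaling $a$ to restore contractivity destroys the approximation $\|(\eta\otimes\pi)(a)\xi_i-\zeta_j\|<1/n$; your uniform bound $\sum_g|c_g|$ is far too weak to repair the witness. The claim happens to be true, but only because of a fact you never invoke: $\eta\otimes\pi$ is tempered for every unitary $\pi$ (temperedness of $\eta$ plus Fell absorption), and every nonzero tempered representation of the free group is weakly equivalent to the regular one, whence $\|(\eta\otimes\pi)(a)\|=\|\eta(a)\|$ independently of $\pi$. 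The paper builds this $\pi$-independence into the very definition of its open sets $V_{j,k,\delta}$ (the norm condition there, $\|\eta(f)\|<4\|x_k\|/\|x_j\|$, does not mention $\pi$), so openness is immediate; without this ingredient your $G_\delta$ claim is unsupported.

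Second, and more seriously, density. You propose to show that the good set itself is dense by constructing, inside every basic neighborhood, a single $\pi$ with $\eta\otimes\pi$ irreducible, via infinitely many finite-dimensional irreducible blocks ``entangled by off-diagonal corrections.'' You never carry this out --- you yourself call it the main obstacle and the crux --- and it is essentially as hard as the theorem itself; density of the good set is anyway not enough for second category without the (gapped) $G_\delta$ part. The paper's structural insight, which your plan misses, is that no irreducible point ever needs to be exhibited directly: one proves density of each single open set $V_{j,k,\delta}$ separately, and for that the manifestly reducible representations $\pi_n\oplus Id$ (finite-dimensional irreducible $\pi_n$ on a large corner, identity on the complement) suffice, since only the one condition $\|(\eta\otimes\pi)(f)x_j-x_k\|<\delta$ must be met: the hypothesis plus Kaplansky handles it on $H\otimes \spV_n$, while the weak-equivalence identity $\|(\eta\otimes Id)(f)\|=\|\eta(f)\|=\|(\eta\otimes\pi_n)(f)\|$ controls the complement. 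Irreducibility then materializes only in the countable intersection $W$, courtesy of Baire. As written, your argument has a genuine gap precisely at the step your own last paragraph flags as unsolved.
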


For the proof we need some lemmas and definitions.

  Fix a countable dense sequence $x_n$, $n \in \mathbb N$ of elements
of $H\otimes \spV$, such that for each $n$ we have $x_n \ne 0$.

Put
$$
U_{j, k, \delta, f} = \{\pi : \|(\eta\otimes\pi)(f)x_j - x_k\|
 <  \delta \},
$$
$$
V_{j, k, \delta} = \bigcup_{f: \|\eta(f)\| < 4\|x_k\|/\|x_j\|}
U_{j, k, \delta, f},
$$
$$
W = \bigcap_{k, j\in {\mathbb N}, \delta \in {\mathbb Q}_+} V_{j, k, \delta}.
$$

\begin{lemma}\label{fin-appr}
For fixed $\delta > 0$, $j, k$ there exists finite dimensional
subspace $\spV_m \subset \spV$ such that for all 
finite dimensional subspaces $\spV_n$ such that
$\spV_m \subset \spV_n \subset \spV$ set
$V_{j, k, \delta}$
contains all representations of form $\pi = \pi_n\oplus Id$
where $\pi_n$ is a finite dimensional irreducible representation
on $\spV_n$, and $Id$ maps all elements of $G$ to identity on
orthogonal complement of $\spV_n$.
\end{lemma}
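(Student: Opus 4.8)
The plan is to produce the required $f$ by approximating, inside the irreducible piece $\eta\otimes\pi_n$, a rank-one operator that carries $x_j$ to $x_k$, and then to observe that the ambient norm $\|\eta(f)\|$ is automatically controlled because $\eta\otimes\pi_n$ is weakly equivalent to $\lambda$. Write $\spV=\spV_n\oplus\spV_n^\perp$, let $P_n$ be the orthogonal projection of $H\otimes\spV$ onto $H\otimes\spV_n$, put $Q_n=1-P_n$, and set $a=\|x_j\|>0$, $b=\|x_k\|$. Since $x_j,x_k\in H\otimes\spV$ and the $\spV_n$ exhaust $\spV$, the remainders $Q_nx_j$, $Q_nx_k$ tend to $0$ while $\|P_nx_j\|\to a$ as $\spV_n$ grows. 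First I would choose $\spV_m$ large enough that, for every $\spV_n\supseteq\spV_m$, one has $\|P_nx_j\|>a/2$ and both $\|Q_nx_j\|$, $\|Q_nx_k\|$ are as small as the final estimate requires. This is the only place $\spV_m$ is pinned down, and it depends on $x_j,x_k,\delta$ alone, not on $\spV_n$ or $\pi_n$.

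Fix now any $\spV_n\supseteq\spV_m$ and any irreducible finite dimensional $\pi_n$ on $\spV_n$. By the hypothesis on $\eta$ the representation $\eta\otimes\pi_n$ is irreducible, so the commutant of $(\eta\otimes\pi_n)(G)$ is scalar and, by the double commutant theorem, the strong closure of the selfadjoint algebra $(\eta\otimes\pi_n)({\mathcal A})$ is all of $B(H\otimes\spV_n)$. Let $T$ be the rank-one operator on $H\otimes\spV_n$ with $T(P_nx_j)=P_nx_k$ and $\|T\|=\|P_nx_k\|/\|P_nx_j\|$. Applying the Kaplansky density theorem to $T/\|T\|$ and rescaling by $\|T\|$, I obtain $f\in{\mathcal A}$ with $\|(\eta\otimes\pi_n)(f)\|\le\|T\|$ and with $\|(\eta\otimes\pi_n)(f)P_nx_j-P_nx_k\|$ below any prescribed tolerance.

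The decisive step is the size constraint. Since $\eta$ is tempered, $\eta\otimes\pi_n$ is weakly contained in $\lambda\otimes\pi_n$, which is a multiple of $\lambda$ (Fell's absorption principle); hence $\eta\otimes\pi_n$ is tempered, and being also irreducible it is weakly equivalent to $\lambda$. Therefore $\|\eta(f)\|\le\|\lambda(f)\|=\|(\eta\otimes\pi_n)(f)\|\le\|T\|=\|P_nx_k\|/\|P_nx_j\|<2b/a<4b/a$, so $f$ meets the norm condition defining $V_{j,k,\delta}$. Finally, writing $\pi=\pi_n\oplus Id$ and using orthogonality of $H\otimes\spV_n$ and $H\otimes\spV_n^\perp$,
\[
\|(\eta\otimes\pi)(f)x_j-x_k\|^2=\|(\eta\otimes\pi_n)(f)P_nx_j-P_nx_k\|^2+\|(\eta\otimes Id)(f)Q_nx_j-Q_nx_k\|^2 .
\]
The first term is small by the Kaplansky step; in the second, $\|(\eta\otimes Id)(f)\|=\|\eta(f)\|<2b/a$, so it is at most $(2b/a)\|Q_nx_j\|+\|Q_nx_k\|$, which the choice of $\spV_m$ makes small. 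Taking all tolerances small enough yields $\pi\in U_{j,k,\delta,f}\subseteq V_{j,k,\delta}$. The main obstacle is precisely this norm control: Kaplansky governs only $\|(\eta\otimes\pi_n)(f)\|$, whereas $V_{j,k,\delta}$ constrains $\|\eta(f)\|$, and it is the weak equivalence of the irreducible tempered representation $\eta\otimes\pi_n$ with $\lambda$ that bridges the two.
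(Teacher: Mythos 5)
Your proof is correct and takes essentially the same route as the paper's: project $x_j, x_k$ onto $H\otimes \spV_n$, use irreducibility of $\eta\otimes\pi_n$ together with the Kaplansky density theorem to produce $f$ approximately carrying the projection of $x_j$ to that of $x_k$ with $\|(\eta\otimes\pi_n)(f)\|$ controlled, transfer that bound to $\|\eta(f)\|$ via weak equivalence of the irreducible tempered representation $\eta\otimes\pi_n$ with the regular representation, and assemble the final estimate. The only differences are cosmetic (you split the error by orthogonality where the paper uses a three-term triangle inequality, and you make the Fell absorption step explicit), so there is nothing to correct.
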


\begin{proof}
Let $x_j'$ (respectively $x_k'$) be orthogonal projection
of $x_j$ (respectively $x_k$) onto $H\otimes \spV_n$.
Choose $\spV_m$ such that $\|x_k - x_k'\| < \delta/3$,
$\|x_j - x_j'\| < \frac{\delta\|x_j\|}{12\|x_k\|}$
and $\|x_k'\|/\|x_j'\| < 2\|x_k\|/\|x_j\|$ (once it holds
for projection onto $H\otimes \spV_m$ it also holds for projection
onto larger space $H\otimes \spV_n$).

By assumption $\eta\otimes\pi_n$ is irreducible, so
$(\eta\otimes\pi_n)({\mathcal A})$ is strongly dense in
$B(H\otimes \spV_n)$.
By Kaplansky density theorem there exist
$f \in {\mathcal A}$ such that
$$
\|(\eta\otimes\pi_n)(f)x_j' - x_k'\| < \frac{\delta}{3}
$$
and
$$
\|\eta\otimes\pi_n)(f)\| \leq 2\frac{\|x_k'\|}{\|x_j'\|}
< 4\frac{\|x_k\|}{\|x_j\|}.
$$
It is well known that tempered irreducible representations
of free group
are weakly equivalent to regular
representation.  So $\eta\otimes\pi_n$ is weakly
equivalent to regular representation which is
weakly equivalent to $\eta$.  Consequently
$$
\|\eta(f)\| = \|(\eta\otimes\pi_n)(f)\| \leq 4\frac{\|x_k\|}{\|x_j\|}.
$$
and
$$
\|(\eta\otimes Id)(f)\| \leq 4\frac{\|x_k\|}{\|x_j\|}.
$$
Now
$$
\|(\eta\otimes(\pi_n\oplus Id))(f)x_j - x_k\|
$$
$$
\leq \|(\eta\otimes\pi_n)(f)x_j' - x_k'\| +
\|(\eta\otimes Id)(f)\|\|x_j - x_j'\| + \|x_k - x_k'\|.
$$
By choice of $\spV_n$ third term is less than $\delta/3$.
By choice of $f$ first term is less than $\delta/3$.
For the middle term we have
$$
\|(\eta\otimes Id)(f)\|\|x_j - x_j'\| \leq
\frac{4\|x_k\|}{\|x_j\|}\|x_j - x_j'\|
$$
which by choice of $\spV_n$ is less than $\delta/3$.
So $\pi_n\oplus Id \in U_{j, k, \delta, f}$.   Our estimate
on $\|\eta(f)\|$ means that $U_{j, k, \delta, f}$ will appear
is sum defining $V_{j, k, \delta}$, so
$\pi_n\oplus Id \in V_{j, k, \delta}$.
\end{proof}

\begin{lemma}
If $\delta > 0$, then
$V_{j, k, \delta}$ is open dense subset in the space of representations.
\end{lemma}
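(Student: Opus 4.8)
The plan is to show separately that $V_{j,k,\delta}$ is open and that it is dense, the latter by invoking Lemma \ref{fin-appr}. For openness, fix $f \in {\mathcal A}$ and note that $U_{j,k,\delta,f}$ is the preimage of the open ball $\{y \in H\otimes\spV : \|y - x_k\| < \delta\}$ under the map $\pi \mapsto (\eta\otimes\pi)(f)x_j$, so it suffices to verify that this map is continuous for the strong operator topology on the space of representations. Since $f$ is a finite linear combination of group elements and $(\eta\otimes\pi)(g)x_j = (\eta(g)\otimes\pi(g))x_j$ with $\eta(g)$ fixed, it is enough that $\pi\mapsto\pi(g)$ is strongly continuous for each $g\in G$. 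Each $\pi(g)$ is a finite product of operators $\pi(s)^{\pm 1}$, $s\in S$; multiplication of uniformly bounded operators is jointly strongly continuous, and for unitaries strong convergence $U_n\to U$ forces $U_n^{*}\to U^{*}$ strongly (writing $\xi = U\zeta$ one has $\|U_n^{*}\xi - \zeta\| = \|(U-U_n)\zeta\|\to 0$). Hence $\pi\mapsto\pi(g)$ is strongly continuous, each $U_{j,k,\delta,f}$ is open, and $V_{j,k,\delta}$, being a union of such sets, is open.

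For denseness, fix a representation $\pi_0$ and a basic strong neighborhood $N = \{\pi : \|\pi(s)v_i - \pi_0(s)v_i\| < \epsilon,\ s\in S,\ 1\le i\le p\}$ determined by finitely many vectors $v_1,\dots,v_p\in\spV$ and $\epsilon>0$ (recall $S$ is finite). Let $\spV_m$ be the subspace from Lemma \ref{fin-appr} for the given $j,k,\delta$, and choose a finite dimensional $\spV_n$ with $\spV_m\subset\spV_n\subset\spV$ containing all $v_i$ and all $\pi_0(s)v_i$. Writing $L=\operatorname{span}(v_1,\dots,v_p)$, the operator $\pi_0(s)$ restricts to an isometry $L\to\spV_n$ with image $\pi_0(s)L$; since $\dim L=\dim\pi_0(s)L$, the orthogonal complements of $L$ and of $\pi_0(s)L$ inside $\spV_n$ have equal dimension, so $\pi_0(s)|_L$ extends to a unitary $u_s$ of $\spV_n$ with $u_s v_i=\pi_0(s)v_i$ for all $i$.

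The tuple $(u_s)_{s\in S}$ defines a finite dimensional representation on $\spV_n$, but it need not be irreducible, which Lemma \ref{fin-appr} requires. Because $|S|>1$, irreducible tuples of unitaries on $\spV_n$ are dense in the operator norm, so I may pick an irreducible $\pi_n=(\pi_n(s))_{s\in S}$ on $\spV_n$ with $\|\pi_n(s)-u_s\|$ small enough that $\|\pi_n(s)v_i-\pi_0(s)v_i\| = \|\pi_n(s)v_i-u_s v_i\| < \epsilon$ for all $s,i$. Set $\pi=\pi_n\oplus Id$ with $Id$ the trivial action on the orthogonal complement of $\spV_n$. Since $v_i\in\spV_n$ gives $\pi(s)v_i=\pi_n(s)v_i$, we have $\pi\in N$, while Lemma \ref{fin-appr} gives $\pi\in V_{j,k,\delta}$. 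Thus $V_{j,k,\delta}$ meets every basic neighborhood and is dense; together with openness this proves the lemma.

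The main obstacle is the denseness step: inside an arbitrary strong neighborhood of an arbitrary $\pi_0$ one must manufacture a representation of the special form $\pi_n\oplus Id$ to which Lemma \ref{fin-appr} applies. The two delicate points are extending the partial isometry $\pi_0(s)|_L$ to a genuine unitary of the finite dimensional space $\spV_n$ (handled by the dimension count on orthogonal complements) and perturbing $(u_s)$ to an irreducible tuple without disturbing the approximation on the $v_i$ (which relies on $|S|>1$ and on $\dim\spV_n$ being finite, so that norm-closeness controls the strong behaviour on the chosen vectors).
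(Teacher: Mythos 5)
Your proof is correct and follows essentially the same route as the paper: openness of $V_{j,k,\delta}$ as a union of the open sets $U_{j,k,\delta,f}$, and density via Lemma \ref{fin-appr} together with the density of representations of the form $\pi_n\oplus Id$. The paper simply asserts these two facts (``clearly'' open; density of $\pi_n\oplus Id$ stated without proof), whereas you supply the supporting details --- strong continuity of $\pi\mapsto(\eta\otimes\pi)(f)x_j$, the unitary extension of $\pi_0(s)|_L$, and the norm-perturbation to an irreducible tuple --- all of which are sound.
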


\begin{proof}
Clearly $U_{j, k, \delta, f}$ is open, so also $V_{j, k, \delta}$ is open.
So it remains to show that $V_{j, k, \delta}$ is dense.
But this follows from Lemma \ref{fin-appr}: representations
of form $\pi_n\oplus Id$ form a dense subset in space of all
representations.
\end{proof}

\begin{lemma}
If $\pi \in W$, then $\eta\otimes\pi$ is irreducible.
\end{lemma}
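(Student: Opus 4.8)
The plan is to prove irreducibility by showing that every nonzero vector of $H\otimes\spV$ is cyclic for $\eta\otimes\pi$. This is equivalent to irreducibility: a nonzero closed invariant subspace would contain a nonzero vector, hence also the closure of that vector's orbit under $(\eta\otimes\pi)(\mathcal A)$, forcing the subspace to be everything. So I would fix an arbitrary nonzero $x\in H\otimes\spV$, an arbitrary target $y\in H\otimes\spV$, and $\epsilon>0$, and produce $f\in\mathcal A$ with $\|(\eta\otimes\pi)(f)x-y\|<\epsilon$, using that $\pi\in W\subseteq V_{j,k,\delta}$ for all $j,k,\delta$.

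The conceptual heart is a uniform operator-norm bound valid for \emph{every} unitary $\pi$: for all $f\in\mathcal A$ one has $\|(\eta\otimes\pi)(f)\|\le\|\eta(f)\|$. Applying the standing hypothesis to the trivial one-dimensional representation shows that $\eta=\eta\otimes 1$ is itself irreducible tempered, hence weakly equivalent to $\lambda$, so $\|\eta(f)\|=\|\lambda(f)\|$. Since weak containment is preserved under tensoring, $\eta\otimes\pi$ is weakly contained in $\lambda\otimes\pi$, and by Fell's absorption principle $\lambda\otimes\pi$ is unitarily equivalent to a multiple of $\lambda$, so that $\|(\lambda\otimes\pi)(f)\|=\|\lambda(f)\|$. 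Combining gives $\|(\eta\otimes\pi)(f)\|\le\|(\lambda\otimes\pi)(f)\|=\|\lambda(f)\|=\|\eta(f)\|$. This is exactly the inequality that turns the norm restriction in the definition of $V_{j,k,\delta}$ into usable control.

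With this bound the estimate is a routine three-term split. First I would fix $x_k$ from the dense sequence with $\|x_k-y\|<\epsilon/3$, which fixes the positive number $\|x_k\|$. Then, using density and $x\ne 0$, I would choose $x_j$ so close to $x$ that $\|x_j\|>\|x\|/2$ and $(4\|x_k\|/\|x_j\|)\,\|x-x_j\|<\epsilon/3$; this is possible because as $x_j\to x$ the factor $4\|x_k\|/\|x_j\|$ stays bounded while $\|x-x_j\|\to 0$. Picking a rational $\delta<\epsilon/3$ and using $\pi\in V_{j,k,\delta}$ yields $f\in\mathcal A$ with $\|\eta(f)\|<4\|x_k\|/\|x_j\|$ and $\|(\eta\otimes\pi)(f)x_j-x_k\|<\delta$. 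Then
$$
\|(\eta\otimes\pi)(f)x-y\|\le\|(\eta\otimes\pi)(f)\|\,\|x-x_j\|+\|(\eta\otimes\pi)(f)x_j-x_k\|+\|x_k-y\|,
$$
where the norm bound of the previous paragraph controls the first term by $(4\|x_k\|/\|x_j\|)\|x-x_j\|<\epsilon/3$, the choice of $f$ controls the second by $\delta<\epsilon/3$, and the choice of $x_k$ controls the third by $\epsilon/3$, for a total below $\epsilon$.

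The main obstacle is the norm bound of the second paragraph: it is the single non-formal input, and it is precisely what the otherwise mysterious factor $4\|x_k\|/\|x_j\|$ in $V_{j,k,\delta}$ is engineered to exploit, since it lets one dominate $\|(\eta\otimes\pi)(f)\|$ by a quantity under our control even though $f$ itself depends on $\pi$, $x_j$, and $x_k$. Everything else is bookkeeping: the order of the choices (fix $x_k$ first, then $x_j$) matters only to prevent the constant $\|x_k\|/\|x_j\|$ from blowing up, and the passage from cyclicity of every nonzero vector to irreducibility is standard.
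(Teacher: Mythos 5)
Your proposal is correct and follows essentially the same route as the paper: approximate the cyclic vector and the target by members $x_j,x_k$ of the dense sequence, extract $f$ from $\pi\in W\subset V_{j,k,\delta}$, and run the same three-term estimate, with the first term controlled by the bound $\|(\eta\otimes\pi)(f)\|\le\|\eta(f)\|$ coming from weak containment. The differences are only bookkeeping (order of choices and the explicit constants), and your derivation of that norm bound via Fell's absorption principle simply spells out what the paper compresses into the phrase ``since $\eta\otimes\pi$ is weakly contained in $\eta$.''
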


\begin{proof}
We need to prove that every nonzero $v \in H\otimes \spV$ is
cyclic, that is ${\mathcal A}v$ is dense.  Let $y \in H\otimes \spV$.
Fix $\epsilon > 0$.  Let
$$
\delta_1 = \min(\frac{\epsilon \|v\|}{48\|y\|}, \|v\|/2).
$$
Since sequence $x_n$ is dense we can find $j$ such that
$\|v - x_j\| < \delta_1$.  Note that due to choice of
$\delta_1$ we have $\|v\| \leq 2\|x_j\|$.
Let $\delta_2 = \epsilon/3$.  Since
$x_n$ is dense we can find $k$ such that 
$\|y - x_k\| < \min(\delta_2, \|y|/2)$.
Note that $2\|y\| \geq \|x_k\|$.
Since rationals are dense in real numbers there is
$\delta \in {\mathbb Q}_+$ such that $\delta \leq \delta_2$.
By definition $W \subset V_{j, k, \delta}$, so there is
$f \in {\mathcal A}$ such that $\pi \in U_{j, k, f, \delta}$
and $\|\eta(f)\| \leq 4\|x_k\|/\|x_j\|$.
But since $\eta\otimes\pi$ is weakly contained in $\eta$
condition on $f$ means
$$
\|(\eta\otimes\pi)(f)\|\delta_1
\leq \|\eta(f)\|\frac{\epsilon \|v\|}{48\|y\|}
\leq \frac{4\|x_k\|}{\|x_j\|}\frac{2\epsilon \|x_j\|}{48\|x_k\|/2}
 = \frac{\epsilon}{3}.
$$
$\pi \in U_{j, k, f, \delta}$ means
$$
\|(\eta\otimes\pi)(f)x_k - x_j\| < \delta \leq \frac{\epsilon}{3}. 
$$
Now
$$
\|(\eta\otimes\pi)(f)v - y\| \leq
\|(\eta\otimes\pi)(f)(v - x_j)\| + \|(\eta\otimes\pi)(f)x_j - x_k\|
+ \|y - x_k\|
$$
$$
< \|(\eta\otimes\pi)(f)\|\|v - x_j\| + \frac{\epsilon}{3}
+ \frac{\epsilon}{3} \leq \|(\eta\otimes\pi)(f)\|\delta_1
+ \frac{2\epsilon}{3} \leq \frac{\epsilon}{3} + \frac{2\epsilon}{3}
= \epsilon.
$$
\end{proof}

Generalization: Above we can replace space of unitary operators
with strong operator topology by a subset $U_s$ with stronger topology,
as long as set of operators of form $I + A$ where $A$ is finite
dimensional operator is dense in $U_s$.  For example, we can
take as $U_s$ set of operators of form $I + A$ where $A$ is
compact (or of trace class).  Intersection of $V_{j, k, \delta}$
with $U_s$ is clearly open subset of $U_s$ since topology
on $U_s$ is stronger.  Our density arguments works via
approximation by representations of form $\pi_n\oplus Id$
and works as long as $U_s$ has appropriate approximation
property.

\begin{cor}\label{inf:fib} There exists irreducible boundary realization with
infinite dimensional fiber.
\end{cor}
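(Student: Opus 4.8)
The plan is to feed a suitable $\eta$ into Theorem~\ref{gen:irr} and then read off the fiber from the tensor-product observation recorded in Section~2. Concretely, I would first produce a tempered representation $\eta$ meeting the standing hypothesis of Section~3 --- one for which $\eta\otimes\pi$ is irreducible for every finite dimensional irreducible $\pi$ --- and which in addition admits a boundary realization with \emph{finite} dimensional fiber $K$. These are exactly the representations supplied by the finite-fiber boundary-representation machinery cited in the introduction (\cite{HKS}, \cite{PSt}, \cite{KS04}, \cite{KSS23}), where irreducibility of the tensor product against finite dimensional representations is precisely what is established. Note that taking $\pi$ trivial in the hypothesis shows $\eta$ itself is irreducible, so, being tempered, $\eta$ is weakly equivalent to the regular representation $\lambda$.

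Next, let $\spV$ be an infinite dimensional separable Hilbert space as in Section~3, and regard the space of unitary representations of $G$ on $\spV$ as the complete separable metric space described there. By Theorem~\ref{gen:irr} the set of $\pi$ for which $\eta\otimes\pi$ is irreducible is of second category, hence nonempty by the Baire category theorem; fix one such $\pi$. The representation $\eta\otimes\pi$ is tempered: by Fell's absorption principle $\lambda\otimes\pi$ is unitarily equivalent to a multiple of $\lambda$, so $\eta\otimes\pi$ is weakly contained in $\lambda\otimes\pi$, which is weakly equivalent to $\lambda$ and hence to $\eta$ --- exactly the weak containment of $\eta\otimes\pi$ in $\eta$ used in the lemmas above. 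Being tempered, $\eta\otimes\pi$ admits a boundary realization, and by the fiber computation of Section~2 its fiber is $K\otimes\spV$, which is infinite dimensional because $\spV$ is. Since $\pi$ was chosen so that $\eta\otimes\pi$ is irreducible, this realization is the required irreducible boundary realization with infinite dimensional fiber.

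The only genuinely substantive step is the first one: exhibiting a concrete $\eta$ that satisfies the hypothesis of Theorem~\ref{gen:irr}. The Baire-category portion is automatic once the theorem is available, and the passage from $K$ to the fiber $K\otimes\spV$ is formal; the real input is the existence of tempered representations whose tensor products with all finite dimensional irreducibles remain irreducible, which is provided by the finite-fiber boundary-representation theory rather than by the category argument itself.
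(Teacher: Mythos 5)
Your proposal is correct and follows essentially the same route as the paper: take $\eta$ with the tensor-stability property from the cited literature (the paper pinpoints the anisotropic principal series of \cite{PSt}), apply Theorem~\ref{gen:irr} to get an infinite dimensional $\pi$ with $\eta\otimes\pi$ irreducible, and tensor a boundary realization of $\eta$ with $\pi$ to obtain fiber $K\otimes\spV$. Your extra verification of temperedness of $\eta\otimes\pi$ via Fell's absorption principle, and your insistence that the fiber $K$ of $\eta$'s realization be finite dimensional, are both harmless additions that the paper leaves implicit or does not need.
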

\begin{proof}
In \cite{PSt} it is proven that representations $\eta$ in
anisotropic principal series have property in assumption of
theorem \ref{gen:irr}, that is tensor product with any
finite dimensional representation remains irreducible.
So, by \ref{gen:irr} there exist infinite dimensional representation
$\pi$ such that $\eta\otimes \pi$ is irreducible.  Of course
tensor product of a boundary realization of $\eta$ with $\pi$ gives
boundary realization of $\eta\otimes \pi$ with infinite
dimensional fiber.
\end{proof}

In \cite{HKS} crucial technical assumption is finite trace
condition (FTC) which requires that certain operators
build from representation have finite trace (for details see
\cite{HKS} Definition 5.9 and \cite{PSt} Definition 2.7).

\begin{cor}
There exists irreducible representation having two perfect boundary
realizations which do not satisfy finite trace condition.
\end{cor}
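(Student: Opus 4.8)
The plan is to realize one irreducible representation as a tensor product of \emph{two} boundary representations and then read off a boundary realization from each tensor factor. Concretely, I would take $\eta$ in the anisotropic principal series, so that the hypothesis of Theorem \ref{gen:irr} holds by \cite{PSt}, together with a second representation $\pi$ that is itself tempered and carries a perfect boundary realization on some $L^2(\nu, K)$ over $\partial G$, chosen so that $\eta\otimes\pi$ is irreducible. Writing $\eta$ on $L^2(\mu, H)$ and $\pi$ on $L^2(\nu, K)$, the tensor product $\eta\otimes\pi$ then lives on $L^2(\mu\times\nu,\, H\otimes K)$ over $\partial G\times\partial G$, with $G$ acting by the diagonal translation together with the product of the two boundary multipliers.

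Next I would produce the two realizations by disintegrating this space over the two coordinate projections $\partial G\times\partial G\to\partial G$. Disintegration over the first factor exhibits $\eta\otimes\pi$ as a boundary realization over $\partial G$ with fibre $H\otimes L^2(\nu, K)$, in which the $\eta$-multiplier acts on the base while all of $\pi$ is absorbed into the fibre; disintegration over the second factor does the same with the roles of $\eta$ and $\pi$ interchanged, giving fibre $K\otimes L^2(\mu, H)$. Both fibres are infinite dimensional, since each contains a full copy of an $L^2(\partial G)$-space. I would then check that each disintegration is a perfect boundary realization and that the two are genuinely inequivalent: an equivalence would have to conjugate the multiplication algebra of the first $\partial G$-coordinate onto that of the second while commuting with $\eta\otimes\pi$, but by irreducibility and Schur's lemma the commutant consists only of scalars, and a scalar cannot carry one coordinate algebra onto the other (the coordinate flip, which abstractly identifies the two representations, is not decomposable over either projection). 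Finally, since FTC (\cite{HKS} Definition 5.9, \cite{PSt} Definition 2.7) forces the relevant fibre operators to be of trace class, an infinite dimensional fibre makes FTC fail for each of the two realizations.

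The main obstacle is the very first step: producing a $\pi$ that is simultaneously tempered with a perfect boundary realization \emph{and} makes $\eta\otimes\pi$ irreducible. Theorem \ref{gen:irr} supplies a comeager set of $\pi$ for which $\eta\otimes\pi$ is irreducible, but the tempered boundary representations form a meagre subset of the space of all representations on $\spV$, so for formal category reasons the two sets need not meet; moreover the density mechanism behind the proof, namely approximation by representations $\pi_n\oplus Id$, cannot be run inside the family of boundary representations, since $\pi_n\oplus Id$ is never tempered. I would resolve this either by exhibiting a concrete irreducible tensor product of two principal series, drawing on the irreducibility of $\eta_z\otimes\eta_w$ at suitable parameters available from the methods of \cite{PSt}, or by re-running the category argument of Theorem \ref{gen:irr} inside a complete metric space of boundary representations with fixed fibre and varying multiplier, replacing the approximants $\pi_n\oplus Id$ by boundary representations with the analogous finite-dimensional approximation property noted in the Generalization following Lemma \ref{fin-appr}. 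Once such a $\pi$ is in hand, the remaining steps are routine.
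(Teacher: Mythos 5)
Your plan diverges from the paper's proof at the decisive point, and the divergence creates exactly the gap you flag at the end. You require $\pi$ to be simultaneously tempered, equipped with its own perfect boundary realization on some $L^2(\nu,K)$, and such that $\eta\otimes\pi$ is irreducible. As you yourself observe, Theorem \ref{gen:irr} cannot supply such a $\pi$: the tempered boundary representations form a meager family in the space of all representations on $\spV$, and the density mechanism behind the theorem (approximation by $\pi_n\oplus Id$) leaves that family immediately. Neither of your fallback options is available from the paper either: irreducibility of a tensor product of \emph{two} principal series representations is not provided by \cite{PSt} (which treats $\eta\otimes\pi$ with $\pi$ finite dimensional), and re-running the Baire category argument inside a space of boundary representations would require an approximation property that nothing in the paper establishes. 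So as written the proposal does not close, and the missing step is the whole difficulty.

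The idea you are missing is that $\pi$ never needs a boundary realization of its own. The paper exploits the \emph{duplicity} of $\eta$ itself: by \cite{PSt}, an anisotropic principal series representation $\eta$ has two perfect boundary realizations, both satisfying FTC. One then takes any infinite dimensional $\pi$ with $\eta\otimes\pi$ irreducible --- exactly what Theorem \ref{gen:irr} provides, with no temperedness or boundary constraint on $\pi$ --- and tensors \emph{each} of the two realizations of $\eta$ with $\pi$, absorbing $\pi$ into the fiber as in Corollary \ref{inf:fib}. This yields the required pair of boundary realizations of $\eta\otimes\pi$. Note that your ``disintegration over the first coordinate'' is this same construction in the special case where $\pi$ happens to be a boundary representation; the point is that the construction needs no hypothesis on $\pi$, so the second realization should come from the second realization of $\eta$, not from a second copy of $\partial G$. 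Finally, your last step is also too quick: FTC is not automatically violated by an infinite dimensional fiber, since a positive operator on an infinite dimensional space can perfectly well have finite trace. The paper instead verifies the failure by inspecting the proof of \cite{PSt} Lemma 2.8 for this particular pair of realizations; some argument of that kind, specific to the construction, is genuinely needed.
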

\begin{proof}
Like in proof of Corollary \ref{inf:fib}
we take $\eta$ form anisotropic principal series.  By \cite{PSt}
$\eta$ has two perfect boundary realizations which satisfy finite
trace condition.  Take infinite dimensional $\pi$ such
that $\eta\otimes \pi$ is irreducible.  Inspection of proof
of \cite{PSt} Lemma 2.8 shows that corresponding pair of
boundary realizations of $\eta\otimes \pi$ does not satisfy
finite trace condition.
\end{proof}

{Remark}.  One can prove the last corollary on a quite different
way, by considering restriction of representations of $PSL(2, {\mathbb R})$
to an embedded free group.  However, we give different way in
which FTC can fail and our genericity result indicates that
that this failure mode is probably quite common.

\section{Final remarks}

We could weaken assumption of $\eta$ in Theorem \ref{gen:irr}
to the following: for each $n$ set of $n$-dimensional irreducible unitary
representations of $G$ such that $\eta\otimes \pi$ is irreducible
is of second category.

Theorem \ref{gen:irr} can be formulated in the following way:
if $\eta$ is a tempered irreducible unitary representation of $G$ satisfying
certain extra condition, then set of unitary representations $\pi$
such that hat $\eta\otimes \pi$ is irreducible
is of second category.  It is natural to ask if extra conditions
are really necessary, maybe the result remains true for all
tempered irreducible unitary representations of $G$?  We do
not know the answer.  However, by our arguments the question
has positive answer if for each $n$ answer for $\pi$ of dimension $n$
is positive.

{\bf Acknowledgements:} I would like to thank Tim Steger for
fruitful discussions.

\end{document}